\newtheorem{thm}{Theorem}[section]
\newtheorem{prop}[thm]{Proposition}
\newtheorem{lemma}[thm]{Lemma}
\newtheorem{rem}[thm]{Remark}
\newtheorem{defi}[thm]{Definition}
\newtheorem{conjecture}[thm]{Conjecture}
\newcommand{\R}{\mathbb{R}}             % REAL
\newcommand{\N}{\mathbb{N}}             % INTEGER
\newcommand{\Dn}{\Lambda_{\gamma,\lambda} }
\newcommand{\parn}{\par\noindent}
\newcommand{\cea}{c_{\epsilon, \alpha}}
\newcommand{\fea}{f_{\epsilon, \alpha}}
\newcommand{\lea}{\lambda_{\epsilon, \alpha}}
\newcommand{\Section}[1]{\section{#1} \setcounter{equation}{0}}
\begin{document}

\title{Global counterexamples to uniqueness  for a Calder\'on problem with $C^k$ conductivities}
\author{Thierry Daud\'e \footnote{Research supported by the French National Research Project GDR Dynqua} $^{\,1}$, 
Bernard Helffer \footnote{Research supported by the French National Research Project GDR Dynqua} $^{\,2}$, 
Niky Kamran \footnote{Research supported by NSERC grant RGPIN 105490-2018} $^{\,3}$ and Fran\c cois Nicoleau \footnote{Research supported by the French National Research Project GDR Dynqua} $^{\,2}$\\[12pt]
$^1$  \small  Universit\' e de Franche-Comt\' e, CNRS, LmB (UMR 6623), F-25000 Besan\c con, France  \\
\small thierry.daude@univ-fcomte.fr\\
$^2$  \small  Laboratoire de Math\'ematiques Jean Leray, UMR CNRS 6629, \\ 
\small Nantes Universit\'e,  F-44000 Nantes, France.\\
\small Email: bernard.helffer@univ-nantes.fr , francois.nicoleau@univ-nantes.fr \\
$^3$ \small Department of Mathematics and Statistics, McGill University,\\ \small  Montreal, QC, H3A 0B9, Canada. \\
\small Email: niky.kamran@mcgill.ca 
}

%%% TITLE %%%

%%% DATE %%%

%\date{\today}

%%% TITLE %%%

\maketitle

%%% ABSTRACT %%%

\begin{abstract}

Let  $\Omega \subset \R^n$, $n \geq 3$, be a fixed smooth bounded domain, and let $\gamma$ be a smooth conductivity in $\overline{\Omega}$. Consider a non-zero frequency 
$\lambda_0$ which does not belong to the Dirichlet spectrum of $L_\gamma = -{\rm div} (\gamma \nabla \cdot)$. Then, for all $k \geq 1$, there exists an infinite number of pairs of non-isometric $C^k$ conductivities $(\gamma_1, \gamma_2)$ on $\overline{\Omega}$, (see Definition \ref{isomcond}), which are close to $\gamma$ (see Definition \ref{isomclose}) such that the associated DN maps at frequency $\lambda_0$ satisfy
\begin{equation*}
	\Lambda_{\gamma_1,\lambda_0} = \Lambda_{\gamma_2,\lambda_0}.
\end{equation*}

%%% KEYWORDS %%%

\vspace{0.5cm}

\noindent \textit{Keywords}. Inverse problems, Anisotropic Calder\'on problem. %Unique continuation principle.

%%% SUBJECTS CLASSIFICATION %%%%

\noindent \textit{2010 Mathematics Subject Classification}. Primaries 81U40, 35P25; Secondary 58J50.

\end{abstract}

\tableofcontents
%\newpage

%%%%%%%%%%%%%%%%%%%%%%%%%%%%%%%%%%%%%%%%%%%%% INTRODUCTION %%%%%%%%%%%%%%%%%%%%%%%%%%%%%%%%%%%%%%%%%%%%%%%%%%%%%%%%%%%%%%%%%%%%%

\Section{Introduction.} \label{0}

\subsection{An anisotropic Calder\' on inverse problem at non-zero frequency.}

\vspace{0.1cm}
\par\noindent 

In this paper, we construct for all $k\geq 1$ global $C^{k}$ counterexamples to uniqueness for a modified version at non-zero frequency of the classical anisotropic Calder\' on inverse problem. Before stating our results, we first need to set up the inverse problem being considered and in doing so introduce some of the notation and terminology that will be used in the rest of our paper. 

\vspace{0.1 cm}

Let $\Omega$  be a bounded domain of $\R^n,\,n \geq 3$, with $C^{\infty}$ boundary, let $\gamma = (\gamma^{ij})$ be a bounded measurable function from $\overline{\Omega}$ to the set ${\mathcal S}_n$ of positive-definite symmetric matrices and let $\lambda \not= 0$ be a non-zero real parameter.  

\vspace{0.1cm}

We consider the following Dirichlet problem:
\begin{equation} \label{Eq00}
	\left\{ \begin{array}{cc}  
		L_\gamma u := - {\rm div\,} (\gamma \nabla u) =  \lambda u     , & \textrm{on} \ \Omega, \\ 
		u = f, & \textrm{on} \ \partial \Omega. 
	\end{array} 
    \right.
\end{equation}

\par
 A classical  result (see for instance \cite{AHG, GT, Sa, Ta1}) ensures that if $\lambda$ does not belong to the Dirichlet spectrum of $L_{\gamma}$, then for any $f \in H^{1/2}(\partial \Omega)$, 
there exists a unique weak solution $u \in H^1(\Omega)$ to the Dirichlet problem (\ref{Eq00}). For completeness, recall that $u \in H^1 (\Omega)$ is a weak solution of (\ref{Eq00}) if
\begin{equation}\label{weaksolution}
	\int_\Omega \gamma \nabla u \cdot \nabla v \ dx   = \lambda  \ 	\int_\Omega u v \ dx \ \  {\rm{for \ all}} \ v \in H_0^1(\Omega),
\end{equation}
and if the trace of $u$ on the boundary is equal to $f$. 

\vspace{0.1cm}

Recall now that the Dirichlet-to-Neumann (DN) map $\Dn: H^{1/2}(\partial \Omega) \to H^{-1/2}(\partial \Omega)$ is the elliptic pseudo-differential operator of order one (at least if  $\gamma$ is regular enough) defined in the weak sense by
\begin{equation}
	\left\langle \Dn f | g \right \rangle = \int_\Omega \gamma \nabla u \cdot \nabla v \ dx  - \lambda \int_{\Omega} uv \ dx , \ {\rm{for \ all}} \ f,  g \in H^{1/2}(\partial \Omega),
\end{equation}
where $u$ is the unique weak solution of the Dirichlet problem (\ref{Eq00}), $v$ is any element of $H^1(\Omega)$ such that $v_{|\partial \Omega} = g$, and
$\left\langle \cdot  | \cdot  \right \rangle$ is the standard $L^2$ duality pairing between $ H^{1/2}(\partial \Omega)$ and its dual. 
In the case in which the coefficient matrix $\gamma$ and the boundary data $f$ are smooth\footnote{Throughout our paper, we say that a function is smooth if it lies in $C^{\infty}(\overline{\Omega})$.}, this definition coincides with the usual one:
\begin{equation}\label{DNmapstrong}
	\Dn f = (\gamma \nabla u) \cdot \nu  _{ \ |\partial \Omega} \  , 
\end{equation}
% \sum_{i,j=1}^n \nu^i \gamma^{ij} \frac{\partial u}{\partial x_j} _{ \ |\partial \Omega} \  ,
where $\nu = (\nu^i)$ is the unit outer normal to the boundary. 

\vspace{0.1cm}

The function $\gamma$ will be referred to as a \emph{conductivity} and the parameter $\lambda$ as a $\emph{frequency}$ throughout the paper. This commonly used terminology is motivated by the connections between the DN map $\Dn$ and the voltage-to-current map which is used in electrical impedance tomography (EIT) to recover the properties of a medium, such as its electrical conductivity, from boundary measurements, \cite{U1}. 

\vspace{0.1cm}

Our counterexamples to uniqueness, which will be stated in Section \ref{Main} (see Theorem \ref{Main0}), concern the inverse problem of recovering the conductivity $\gamma = (\gamma^{ij})$ from the knowledge of the DN map $\Dn$ at fixed $\lambda\not=0$, up to a natural gauge equivalence which will be determined next in Section \ref{InvNonzero} below. As we shall explain in that section, this inverse problem, though entirely natural in its formulation, is somewhat different from classical Calder\' on inverse problem~\cite{LeU} in several key aspects related to the nature of the gauge invariance, thus helping to explain the existence of counterexamples to uniqueness for this modified inverse problem which involve  $C^k$ conductivities, and which are global in the sense that the DN map is evaluated on boundary data $f$ whose support consists of the entire boundary $\partial \Omega$.

It should also be noted that while the EIT problem actually corresponds to the case $\lambda=0$, which is excluded by our hypothesis $\lambda\not=0$, the Dirichlet problem (\ref{Eq00}) arises in a number of applications, for example in reflection seismology and inverse obstacle scattering problems for electromagnetic waves with selected frequencies in a inhomogeneous medium, (see \cite{Bao, Ber}). This problem is also closely related to the viscoelasticity wave equation written in the harmonic regime, $u(x)$ being  the scalar displacement field. Inverse problems in viscoelasticity have many applications in medicine and the mechanics of materials, (see for example \cite{BuCh} for details). A related but different problem was also studied in \cite{BeRo, Ou} where the authors show that the self-adjoint Dirichlet (or Robin/mixed) operator associated with an elliptic differential expression on $\Omega$ is determined uniquely up to unitary equivalence by the knowledge of the Dirichlet-to-Neumann maps on an open subset of the boundary and for a set of frequencies which has an accumulation point in the resolvent set of the underlying operator.

\vspace{0.2cm}
\parn

\subsection{Invariance by unimodular diffeomorphisms at non zero frequency and a modified anisotropic Calder\' on conjecture.}\label{InvNonzero}

Although we assumed that the frequency $\lambda \not=0$ when setting up the Dirichlet problem (\ref{Eq00}), both the existence and uniqueness result for solutions of the Dirichlet problem and the definition of the DN map carry over directly to the case $\lambda=0$. It is well-known in that case that the corresponding DN map  $\Lambda_{\gamma,0}$ admits a large gauge invariance group, namely it is invariant under diffeomorphisms $\Psi : \overline{\Omega} \to \overline{\Omega}$ such that $\Psi_{|\partial \Omega} ={\rm Id} $, namely if: 
\begin{equation}\label{pullback}
	\Psi_* \gamma = \left( \frac{  D\Psi \cdot \gamma \cdot  (D\Psi)^T}{|{\rm det\,}  D\Psi |} \right)  \circ \Psi^{-1} ,
\end{equation}
where $D\Psi$ denotes the differential of $\Psi$ and $(D\Psi)^T$ its transpose\footnote{It should be noted that the transformation law (\ref{pullback}), which is that of a $(2,0)$-tensor density of weight $-1$, makes sense even if the components $(\gamma^{ij})$ are only assumed to be bounded and measurable.}, then one has 
\begin{equation}\label{invariance}
	\Lambda_{\Psi_* \gamma,0} = \Lambda_{\gamma,0}\,,
\end{equation}
(see \cite{LeU} and  Lemma \ref{gaugeinvariance} below). We shall see shortly how the gauge invariance (\ref{invariance}) follows as a consequence of Lemma \ref{gaugeinvariance} below. 

\vspace{0.2cm}

This leads to the formulation of the well-known anisotropic Calder\' on conjecture in the case of zero frequency:
\begin{conjecture}[Anisotropic Calder\' on conjecture at zero frequency]\label{CalConj}
Let $\Omega \subset \R^n$, $n \geq 3$, 
be a bounded domain with smooth boundary and let $\gamma_{1},\,\gamma_{2}$ be bounded measurable anisotropic conductivities on $\overline{\Omega}$. 
If 
$$ 
\Lambda_{\gamma_1,0} = 	\Lambda_{\gamma_2,0}
$$
then there exists a diffeomorphism $\Psi: \overline{ \Omega} \to  \overline{ \Omega}$ such that such that $\Psi_{|\partial \Omega} ={\rm Id} $ and such that 
$$
\gamma_2 = \Psi_* \gamma_1\,.
$$
%$$
%\gamma_1  = \gamma_2 \ \ {\rm{ up \ to \ isometry.}}
%$$
%(in other words $\gamma_1$ and $\gamma_2$ are on the same orbit).
\end{conjecture}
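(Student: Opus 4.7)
The plan is to combine a geometric reformulation with boundary determination and interior propagation. First, since $n\geq 3$, I would identify the conductivity $\gamma=(\gamma^{ij})$ with a Riemannian metric on $\overline{\Omega}$ by setting $g^{ij}=(\det\gamma)^{-1/(n-2)}\gamma^{ij}$, which yields $\sqrt{|g|}\,g^{ij}=\gamma^{ij}$ and therefore $L_\gamma=\sqrt{|g|}\,\Delta_g$. The unique weak solution of the Dirichlet problem at $\lambda=0$ is thus harmonic for $\Delta_g$, and a direct computation shows that $\Lambda_{\gamma,0}$ coincides with the DN map of $(\overline{\Omega},g)$. Under this identification, the transformation law (\ref{pullback}) becomes pullback of tensors by a boundary-fixing diffeomorphism, so Conjecture \ref{CalConj} reduces to the statement that two smooth metrics on $\overline{\Omega}$ with the same Dirichlet-to-Neumann map for $\Delta_g$ must be related by a diffeomorphism fixing the boundary pointwise.

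Next I would perform boundary determination. Working in boundary normal coordinates near an arbitrary point of $\partial\Omega$, one uses highly oscillatory quasi-mode solutions concentrated near the boundary to read off the full symbol of the pseudodifferential operator $\Lambda_{g_i}$ of order one. Iterating this procedure and differentiating in the normal direction reconstructs the Taylor jet of $g_i$ at $\partial\Omega$ modulo the action of boundary-fixing diffeomorphisms. After composing $g_2$ with such a diffeomorphism, one may therefore assume that $g_1$ and $g_2$ agree to infinite order along $\partial\Omega$.

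The hardest step is propagating this agreement into the interior of $\Omega$, and this is where the main obstacle lies. In the isotropic Calder\'on problem, propagation is accomplished by the complex geometrical optics solutions of Sylvester--Uhlmann; but no analogue is known for arbitrary smooth anisotropic metrics in dimension $n\geq 3$, and the available CGO techniques only succeed in restricted geometric settings such as the conformally transversally anisotropic one. Under the stronger hypothesis that $g_1$ and $g_2$ are real-analytic, one can appeal to the argument of Lassas--Uhlmann~\cite{LeU}, using unique continuation of the Green's function across the boundary together with analytic continuation along curves to build a global isometry; this is the current state of the art and suffices to close the proof in the analytic category. In the generality stated in Conjecture \ref{CalConj}, however, the problem remains open, and constructing suitable interior solutions adapted to general anisotropic coefficients is the genuine bottleneck. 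The counterexamples developed in the remainder of this paper at $\lambda\neq 0$ should be read as a warning that once one leaves the zero-frequency gauge framework, uniqueness up to boundary-fixing diffeomorphisms can fail dramatically even in the $C^k$ category.
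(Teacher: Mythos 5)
There is nothing to compare your proposal against: the statement you were asked about is Conjecture~\ref{CalConj}, which the paper states as a \emph{conjecture} and does not prove. Indeed, Section~\ref{linktoCalderon} of the paper explicitly records that in dimension $n\geq 3$ the anisotropic Calder\'on conjecture for smooth (non-analytic) metrics is a major open problem, with affirmative answers known only in special settings (dimension two, real-analytic or Einstein manifolds, conformally transversally anisotropic geometries) and with counterexamples known at low regularity or for degenerate metrics. Your write-up is an accurate roadmap of these partial results --- the reduction to the Riemannian formulation via (\ref{metric}), boundary determination of the jet of the metric modulo boundary-fixing diffeomorphisms, and the analytic-category argument --- but, as you yourself say, the interior propagation step for general smooth anisotropic coefficients is precisely the open bottleneck, so what you have written is a survey of the state of the art rather than a proof, and no proof is currently possible, let alone expected to match one in the paper.

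Two smaller points if you keep this text. First, the attribution: \cite{LeU} is Lee--Uhlmann (the real-analytic result), while the Green's-function/unique-continuation argument you describe is Lassas--Uhlmann \cite{LaU} (see also \cite{LaTU}); and even in the analytic category the conclusion requires the metrics to be real-analytic, a hypothesis absent from Conjecture~\ref{CalConj}, which is stated for bounded measurable conductivities --- at that regularity even the boundary determination step you invoke (symbol calculus for $\Lambda_{\gamma,0}$ in boundary normal coordinates, normal derivatives of the jet) is not available. Second, with the paper's convention $L_\gamma=-\mathrm{div}(\gamma\nabla\cdot)$ your identity should read $L_\gamma u=-\sqrt{|g|}\,\Delta_g u$; this is harmless for the DN map but worth stating correctly.
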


\vspace {0.2cm}

Now, it turns out that when $\lambda \not=0$, there is a corresponding gauge invariance for the DN map $\Dn$ which is a little more subtle than in the case $\lambda=0$. Indeed, as shown below in Lemma \ref{gaugeinvariance}, one has in that case to restrict to the subgroup ${\rm SDiff}(\overline{\Omega})$ of volume-preserving diffeomorphisms of $\Omega$ that are equal to the identity on $\partial \Omega$, i.e  to diffeomorphisms $\Psi$ such that 
$$|{\rm det\,} D\Psi|=1 \mbox{ on } \Omega\,,\quad \Psi_{|\partial \Omega} ={\rm Id} \,.$$ 

\vspace{0.1cm}\parn

%{\clb (Peut-�tre �crire qq phrases sur la diff�rence entre le cadre Riemannien et le formalisme (EIT) � cet endroit ?)}. 

\vspace{0.2cm}\parn
This is a consequence of the following:

\begin{lemma}\label{gaugeinvariance}
Let $\Psi : \overline{\Omega} \to \overline{\Omega}$ be a diffeomorphism and  assume that $u$ solves 
$$- {\rm div\,} ((\Psi_* \gamma) \nabla u) = \lambda u\,.$$
%\parn
Then, if we set  $\tilde{u} = u \circ \Psi$, one has  $$- {\rm div\,} (\gamma \nabla \tilde{u}) = \lambda\, |{\rm det\,} D \Psi | \ \tilde{u}\,.$$
\end{lemma}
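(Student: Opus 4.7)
The plan is to work with the weak formulation of the equation and perform a change of variables via $\Psi$. Let $v \in H^1_0(\Omega)$ be an arbitrary test function, and set $\tilde v = v \circ \Psi$, which again lies in $H^1_0(\Omega)$ since $\Psi$ fixes $\partial \Omega$ (a fact that is implicit in the context, though the lemma itself does not require the boundary condition on $\Psi$; the identity will hold pointwise where everything is smooth, and in the weak sense in general). I would first record the two basic transformation rules under the substitution $x = \Psi(y)$: the volume element transforms as $dx = |\det D\Psi(y)|\, dy$, and the gradient transforms via the chain rule, $\nabla u(\Psi(y)) = (D\Psi(y))^{-T} \nabla \tilde u(y)$, with the analogous identity for $v$ and $\tilde v$.

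The core calculation is then to substitute these into $\int_\Omega (\Psi_* \gamma) \nabla u \cdot \nabla v\, dx$. Using the explicit form of $\Psi_* \gamma$ from (\ref{pullback}), the Jacobian factor $|\det D\Psi|$ coming from the volume element cancels exactly with the $|\det D\Psi|^{-1}$ appearing in the pullback. What remains is
\[
\int_\Omega \bigl(D\Psi \cdot \gamma \cdot (D\Psi)^T\bigr)(D\Psi)^{-T}\nabla \tilde u \cdot (D\Psi)^{-T}\nabla \tilde v \, dy,
\]
and moving the outer factor $D\Psi$ across the Euclidean inner product as its transpose $(D\Psi)^T$ produces the collapse $(D\Psi)^T (D\Psi)^{-T} = \mathrm{Id}$ on both sides, leaving simply $\int_\Omega \gamma \nabla \tilde u \cdot \nabla \tilde v\, dy$.

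For the right-hand side, the change of variables is even more direct: $\lambda \int_\Omega u v\, dx = \lambda \int_\Omega \tilde u\, \tilde v\, |\det D\Psi|\, dy$, and the Jacobian remains uncancelled. Comparing the two expressions yields
\[
\int_\Omega \gamma \nabla \tilde u \cdot \nabla \tilde v \, dy = \lambda \int_\Omega |\det D\Psi|\, \tilde u\, \tilde v\, dy \qquad \forall \tilde v \in H^1_0(\Omega),
\]
which is exactly the weak form of $-\mathrm{div}(\gamma \nabla \tilde u) = \lambda |\det D\Psi|\, \tilde u$.

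The only point requiring minor care is the bookkeeping of transposes in the inner-product manipulation, together with confirming that the Jacobian of $\Psi^{-1}$ appearing when $\Psi_* \gamma$ is evaluated at $x = \Psi(y)$ is correctly handled: since $(\Psi_*\gamma)(\Psi(y))$ by definition equals $(D\Psi(y)\,\gamma(y)\,(D\Psi(y))^T)/|\det D\Psi(y)|$ without any further composition, the formula slots in cleanly. I do not foresee a substantive obstacle; the whole lemma is essentially the statement that the pullback formula (\ref{pullback}) was designed to make the principal part of the operator transform correctly, and that the zeroth-order term picks up precisely the Jacobian factor because multiplication operators (unlike the divergence form part) do not absorb the volume distortion.
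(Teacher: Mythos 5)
Your proposal is correct and follows essentially the same route as the paper: testing against $v$, changing variables $x=\Psi(y)$ so that the Jacobian cancels the $|\det D\Psi|^{-1}$ in the pullback formula and the transposes collapse in the principal part, while the zeroth-order term retains the factor $|\det D\Psi|$, giving the weak form of the claimed equation. The only cosmetic difference is that you test with $H^1_0(\Omega)$ functions where the paper uses $C_0^\infty(\Omega)$, which changes nothing of substance.
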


\begin{proof}
Let $v \in C_0^{\infty}(\Omega)$ be a test function. We write:
\begin{equation}
	\int_{\Omega} ((\Psi_* \gamma) \nabla_y u)  \cdot  \nabla_y v \ dy =
	\int_{\Omega}  \left( \left( \frac{  D\Psi \cdot \gamma \cdot (D\Psi)^T }{|{\rm det\,}  D\Psi |} \right)  \circ \Psi^{-1} \right) \nabla_y u \cdot   \nabla_y v\ dy .
\end{equation}	
Then, making the change of variables $y = \Psi (x)$, we get immediately  : 
\begin{eqnarray*}
	\int_{\Omega} ((\Psi_* \gamma) \nabla_y u)  \cdot  \nabla_y v \ dy 
	&=&  \int_{\Omega}  \left(  \left( \frac{ D\Psi \cdot \gamma \cdot (D\Psi)^T  }{|{\rm det\,}  D\Psi |} \right) ((D\Psi)^T)^{-1} \nabla_x \tilde{u} \right) 
	\cdot  ((D\Psi)^T)^{-1} \nabla_x \tilde{v} \ |{\rm det\,} D \Psi| \ dx \\
	&=&  \int_{\Omega}  \left(\gamma \nabla_x \tilde{u}\right) \cdot \nabla_x \tilde{v} \ dx 
%	&=& 	\int_{\Omega} - {\rm div\,} (\gamma \nabla_x \tilde{u}) \ \tilde{v} \ dx. 
\end{eqnarray*}
Since by hypothesis $u$ satisfies $- {\rm div\,} ((\Psi_* \gamma) \nabla_y u) = \lambda u$, we have obtained:
\begin{equation}\label{integral}
	\int_{\Omega} \lambda u \ v \ dy = \int_{\Omega}  \left(\gamma \nabla_x \tilde{u}\right) \cdot \nabla_x \tilde{v} \ dx.	
\end{equation}
Making again the change of variables $y = \Psi (x)$ in the left hand side  of (\ref{integral}), we get :
\begin{equation*}
	\int_{\Omega} \left(\lambda  |{\rm det\,} D \Psi | \   \tilde{u} \right)  \ \tilde{v}  \ dx = \int_{\Omega}  \left(\gamma \nabla_x \tilde{u}\right) \cdot \nabla_x \tilde{v} \ dx,	
\end{equation*}
or, in other words,  $- {\rm div\,} (\gamma \nabla \tilde{u}) = \lambda\,  |{\rm det\,} D \psi | \ \tilde{u}$ in a weak sense.
\end{proof}

\vspace{0.2cm}
\parn
When $\Psi_{|\partial \Omega} = {\rm Id}$, it follows from Lemma \ref{gaugeinvariance} that $u$ and $\tilde{u}$ satisfy the same equation (with the same boundary data) 
if and only if  $ |{\rm det\,} D \Psi| =1$ in $\Omega$.
%(using a standard connexity argument since $\Psi_{|\partial \Omega} ={\rm Id} $ ).

\vspace{0.2cm}
\parn
As a corollary to the preceding lemma, we obtain immediately:
\begin{prop}\label{gaugenonzero}
For any $\lambda \in \R$ and $\Psi \in {\rm SDiff(\overline{ \Omega})}$, we have 
\begin{equation}\label{invfreq} 
	\Lambda_{\Psi_* \gamma,\lambda} = \Dn .
\end{equation}
\end{prop}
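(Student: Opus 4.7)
The plan is to deduce the proposition directly from Lemma \ref{gaugeinvariance} by carefully tracking both the PDE and the weak formulation of the DN map under the change of variables $y = \Psi(x)$, using the two defining properties of $\Psi \in {\rm SDiff}(\overline{\Omega})$, namely $|{\rm det}\, D\Psi| = 1$ on $\Omega$ and $\Psi_{|\partial\Omega} = {\rm Id}$.

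First, I would fix $f \in H^{1/2}(\partial\Omega)$ and let $u \in H^1(\Omega)$ be the unique weak solution of the Dirichlet problem \eqref{Eq00} with conductivity $\Psi_* \gamma$ and boundary data $f$ (assuming as usual that $\lambda$ lies outside the Dirichlet spectrum; otherwise the statement is vacuous or handled by the same identity for the corresponding bilinear forms). Set $\tilde{u} = u \circ \Psi$. Lemma \ref{gaugeinvariance} then gives $-{\rm div}(\gamma \nabla \tilde{u}) = \lambda\,|{\rm det}\, D\Psi|\,\tilde{u} = \lambda \tilde{u}$ on $\Omega$, since $|{\rm det}\, D\Psi| \equiv 1$. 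Moreover, because $\Psi$ is the identity on $\partial\Omega$, the trace of $\tilde{u}$ equals the trace of $u$, namely $f$. By uniqueness, $\tilde{u}$ is precisely the weak solution of \eqref{Eq00} for $\gamma$ with data $f$.

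Next, to compare the two DN maps applied to $f$, pair against an arbitrary $g \in H^{1/2}(\partial\Omega)$ and choose an $H^1$-extension $\tilde{v}$ of $g$. Since $\Psi_{|\partial\Omega} = {\rm Id}$, the function $v := \tilde{v} \circ \Psi^{-1}$ is also an $H^1$-extension of $g$, and so it may be used to compute $\langle \Lambda_{\Psi_* \gamma,\lambda} f \mid g\rangle$. The computation in the proof of Lemma \ref{gaugeinvariance} already shows
\begin{equation*}
\int_\Omega (\Psi_* \gamma)\nabla_y u \cdot \nabla_y v\, dy = \int_\Omega \gamma \nabla_x \tilde{u} \cdot \nabla_x \tilde{v}\, dx ,
\end{equation*}
while the change of variable $y = \Psi(x)$ in the zeroth-order term yields
\begin{equation*}
\int_\Omega u\, v\, dy = \int_\Omega \tilde{u}\, \tilde{v}\, |{\rm det}\, D\Psi|\, dx = \int_\Omega \tilde{u}\, \tilde{v}\, dx ,
\end{equation*}
where the unimodularity hypothesis is used precisely at this step. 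Subtracting $\lambda$ times the second identity from the first gives $\langle \Lambda_{\Psi_* \gamma, \lambda} f \mid g\rangle = \langle \Lambda_{\gamma,\lambda} f \mid g\rangle$ for every $g$, which is the claim \eqref{invfreq}.

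There is no real obstacle here beyond bookkeeping: the algebraic content is already carried by Lemma \ref{gaugeinvariance}, and the proposition is essentially the observation that once $|{\rm det}\, D\Psi| = 1$ the pullback matches not only the principal part $-{\rm div}(\gamma\nabla\cdot)$ but also the $L^2$ pairing appearing in the weak definition of $\Lambda_{\gamma,\lambda}$. The only point to watch is that one must use an extension of $g$ that interacts well with the change of variables, which is automatic because $\Psi$ fixes the boundary.
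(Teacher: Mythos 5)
Your argument is correct and follows exactly the route the paper intends: the paper states Proposition \ref{gaugenonzero} as an immediate corollary of Lemma \ref{gaugeinvariance}, noting that when $\Psi_{|\partial\Omega}={\rm Id}$ and $|{\rm det}\,D\Psi|=1$ the functions $u$ and $\tilde u=u\circ\Psi$ solve the same problem with the same boundary data. Your write-up simply makes explicit the weak-formulation bookkeeping (change of variables in both the gradient term and the $L^2$ term, and independence of the choice of $H^1$ extension of $g$), which is exactly the intended justification.
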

\vspace{0.1cm}\parn
In view of the above proposition, we introduce the following definition:

\begin{defi}\label{isomcond} 
 Let $\gamma_1$, $\gamma_2$ be conductivities defined in $\overline{\Omega}$. We say that $\gamma_1$ and $\gamma_2$ are isometric if there exists
  $\Psi \in {\rm SDiff(\overline{ \Omega})}$ such that $\gamma_2 = \Psi_* \gamma_1$.
\end{defi}
The use of the term isometric in the above definition involves a slight abuse of language since conductivities are not tensorial objects akin to metric tensors, as shown by the transformation law (\ref{pullback}). 

\noindent
In the case of non-zero frequency, we are thus led in view of the above discussion to modify the anisotropic Calder\' on conjecture as follows: 
\begin{conjecture}[Modified anisotropic Calder\' on conjecture at non-zero frequency]\label{ModConj}
Let $\Omega \subset \R^n$, $n \geq 3$, 
be a bounded domain with smooth boundary and let $\gamma_{1},\,\gamma_{2}$ be bounded measurable anisotropic conductivities on $\overline{\Omega}$. 
Let $\lambda \not= 0$ be any fixed frequency that does not belong to the Dirichlet spectrum of $L_{\gamma_j},\,j=1,2$.
If 
$$ 
\Lambda_{\gamma_1,\lambda} = 	\Lambda_{\gamma_2,\lambda}
$$
then $\gamma_1$ and $\gamma_2$ are equal up to isometry, that is there exists  $\Psi \in {\rm SDiff(\overline{ \Omega})}$ such that $\gamma_2 = \Psi_* \gamma_1$.
%$$
%\gamma_1  = \gamma_2 \ \ {\rm{ up \ to \ isometry.}}
%$$
%(in other words $\gamma_1$ and $\gamma_2$ are on the same orbit).
\end{conjecture}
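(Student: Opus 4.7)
My plan follows the standard two-step strategy for anisotropic Calder\'on problems, adapted to the non-zero frequency setting and the restricted gauge group of volume-preserving diffeomorphisms. First, I would establish boundary determination: testing $\Lambda_{\gamma_j,\lambda}$ on highly oscillatory Dirichlet data $f_h = e^{i\xi\cdot x/h}\chi(x)$ with $h\to 0^+$ and comparing the full pseudodifferential symbols of the two DN maps (which are classical of order one) recovers $\gamma_j|_{\partial\Omega}$ from the principal symbol and all of its normal derivatives from the subprincipal terms; the frequency $\lambda$ enters only at lower order in the symbol expansion and therefore does not interfere with this step.

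Next, I would reformulate the problem geometrically. Since $n\geq 3$, associate to each $\gamma_j$ the Riemannian metric $g_j = (\det\gamma_j)^{1/(n-2)}\gamma_j^{-1}$, so that $L_{\gamma_j}u = \lambda u$ takes the form of a Helmholtz-type equation for $g_j$. Under the transformation law (\ref{pullback}) the metric pulls back to $\Psi^*g$ together with a conformal factor involving $|\det D\Psi|$, and the volume-preserving constraint $\Psi\in{\rm SDiff}(\overline{\Omega})$ is precisely what is needed to preserve the frequency term $\lambda u$, as formalized in Lemma \ref{gaugeinvariance}. The conjecture is thereby equivalent to the statement that two Riemannian metrics on $\overline{\Omega}$ sharing a Helmholtz DN map at frequency $\lambda$ and sharing the same Riemannian volume form must agree up to an isometry fixing $\partial\Omega$.

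For the interior step, I would exploit the Alessandrini-type identity
\begin{equation*}
\int_\Omega (\gamma_1-\gamma_2)\nabla u_1\cdot\nabla u_2\,dx = 0,
\end{equation*}
valid for arbitrary solutions $u_j$ of $L_{\gamma_j}u_j = \lambda u_j$ (the contribution $\lambda\int u_1 u_2$ cancels when one uses the symmetry of the DN map in the weak formulation). Feeding complex geometric optics solutions $u_j = e^{\rho\cdot x/h}(a_j+r_j)$ with $\rho\cdot\rho = 0$ into this identity and letting $h\to 0^+$, one extracts Fourier-type constraints on $g_1-g_2$. Combined with the boundary Taylor expansions from Step 1 and the equality of the volume forms (which is encoded in the leading-order behaviour of $\Lambda_{\gamma_j,\lambda}$ at the boundary), these constraints should force the existence of the sought volume-preserving diffeomorphism $\Psi$ with $\Psi|_{\partial\Omega} = {\rm Id}$ and $\gamma_2 = \Psi_*\gamma_1$.

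The main obstacle is precisely this last step. The anisotropic Calder\'on problem in dimension $n\geq 3$ is open in full generality, with unconditional positive results available only under strong additional assumptions such as real-analyticity of the conductivities or a conformally transversally anisotropic structure. Narrowing the gauge group from all identity-on-boundary diffeomorphisms to ${\rm SDiff}(\overline{\Omega})$ ought in principle to rigidify the problem, since it leaves fewer admissible deformations of $\gamma$; however, I do not see how to exploit this rigidity inside the CGO machinery, because the freedom to choose phases $\rho$ is restricted by the anisotropy of $g_j$ and is insensitive to the extra volume constraint at leading order. The counterexamples constructed in the remainder of this paper in fact show that in the $C^k$ category the conjecture as stated fails, so any successful proof must invoke a hypothesis beyond finite smoothness, such as real-analyticity or a spectral condition on $L_\gamma$, that the present CGO-based plan does not directly make use of.
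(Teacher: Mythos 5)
The statement you were asked to prove is a \emph{conjecture}, and the paper contains no proof of it; on the contrary, the entire paper is devoted to refuting it as stated. Theorem \ref{Main0} produces, for every $k\geq 1$, pairs of non-isometric $C^{k}$ conductivities $(\gamma_1,\gamma_2)$ on $\overline{\Omega}$ with $\Lambda_{\gamma_1,\lambda_0}=\Lambda_{\gamma_2,\lambda_0}$ at a non-zero frequency $\lambda_0$. Since $C^k$ conductivities are in particular bounded and measurable, Conjecture \ref{ModConj} in the generality in which it is formulated is false, and no proof strategy can close the gap you identify at the end of your proposal. Your own final paragraph correctly diagnoses this, so the honest conclusion is that your plan is a proof of a false statement and must fail; the only version of the conjecture left open by the paper is the one restricted to $C^\infty$ conductivities (the counterexamples degrade to $C^k$ precisely because the Dacorogna--Moser type diffeomorphism $\Psi$ solving $|\det D\Psi|=1+f$ is only $C^{k+1,\alpha}$, so $\Psi_*\gamma$ loses derivatives).

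Two further cautions about the body of your sketch, should you try to salvage it in the smooth category. First, the paper explicitly warns that the reduction of $L_\gamma u=\lambda u$ to a Riemannian/Laplace--Beltrami formulation, which underlies your ``equivalent geometric statement,'' breaks down at $\lambda\neq 0$: under a general boundary-fixing diffeomorphism the zeroth-order term acquires the factor $|\det D\Psi|$ (Lemma \ref{gaugeinvariance}), which is exactly why the gauge group shrinks to ${\rm SDiff}(\overline{\Omega})$, and the equality of Riemannian volume forms is \emph{not} encoded in the boundary behaviour of the DN map --- the paper's counterexamples have conformal factors $c_{\epsilon,\alpha}$ equal to $1$ near $\partial\Omega$ together with matching normal derivatives, so boundary determination and the full boundary jet are blind to the interior discrepancy. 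Second, the mechanism of non-uniqueness here is not an obstruction to CGO constructions but a genuinely different phenomenon: a conformal rescaling $c^2\gamma$ with $c$ solving the nonlinear equation (\ref{eqnonuniqueness}) is matched against a non-volume-preserving diffeomorphic image $\Psi_*\gamma$, and the two fail to be ${\rm SDiff}$-equivalent by the integral invariant $\int_\Omega(\det\gamma)^{1/(n-2)}dx$. Any correct uniqueness argument would have to rule out precisely this conformal--diffeomorphism interplay, which your Alessandrini/CGO framework does not address.
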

It is appropriate at this stage to make some comments on the equivalent geometric formulation of the anisotropic Calder\' on conjecture at zero frequency in terms of Riemannian metrics as opposed to conductivities \cite{LeU}, and on the ways in which these formulations cease to be equivalent at the level of gauge invariances for the inverse problems once one works at non-zero frequency, as shown in Proposition \ref{gaugenonzero}. These differences will be at the basis of our construction of counterexamples to the anisotropic Calder\' on conjecture at non-zero frequency for smooth conductivities.

\vspace{0.2cm}

Let us begin by observing that we can rewrite the equation 
\begin{equation}\label{Eq000}
L_\gamma u=0\,,
\end{equation}
equivalently in terms of the Laplace-Beltrami operator of a Riemannian metric $(g_{ij})$ on $\overline{\Omega}$ as
\begin{equation}\label{Eq000g}
\Delta_{g}u=0\,,
\end{equation}
where 
$$
 \Delta_{g}=\frac{1}{\sqrt {\det (g_{ij}})}\partial_{i}\bigg(\sqrt{\det (g_{ij})}g^{ij}\partial_{j}\bigg)\,,
$$
and where
\begin{equation}\label{metric}
g^{ij}:=\det(\gamma^{ij})^{\frac{1}{n-2}}\gamma^{ij}\,,\quad g_{ik}\,g^{kj}=\delta^{i}_{j}\,.
\end{equation}
The advantage of rewriting (\ref{Eq00}) in the zero-frequency case $\lambda=0$ in terms of the Riemannian metric whose contravariant components $(g^{ij})$ are given by (\ref{metric}) is that the transformation law (\ref{pullback}) for the conductivity $(\gamma^{ij})$ gets converted into a transformation law for $(g^{ij})$ which is tensorial, that is we have, writing $g$ for the matrix $(g^{ij})$, 
$$
\Psi_* g = \left( D\Psi \cdot g \cdot  (D\Psi)^{T} \right) \circ \Psi^{-1}\,,
$$
for \emph{any} diffeomorphism $\Psi : \overline{\Omega} \to \overline{\Omega}$ such that $\Psi_{|\partial \Omega} ={\rm Id} $. In other words we don't need to assume the unimodularity condition $|D \Psi| =1$ when working at frequency $\lambda =0$ and rewriting (\ref{Eq000}) in the form (\ref{Eq000g}).

\subsection{A brief and non-exhaustive survey of known results on the Calder\' on conjecture. }\label{linktoCalderon}

\vspace{0.1cm}\parn
We now briefly review some of the main contributions to the study of the classical Calder\'{o}n conjecture, that is Conjecture \ref{CalConj}, before stating our results on Conjecture \ref{ModConj}. Note that the main results for this conjecture apply to the global case of full boundary data, that is when ${\mbox {supp}} f = \partial \Omega$, or that of local data, (i.e when the Dirichlet and Neumann data are measured on the same proper open subset $\Gamma$ of the boundary $\partial \Omega$). These contributions are often formulated in terms of the Riemannian metric (\ref{metric}) rather than in terms of conductivities $(\gamma^{ij})$. As remarked earlier, there is no loss of generality in doing this when working at zero frequency.

\vspace{0.1cm}
In dimension $n=2$, for compact and connected surfaces, the anisotropic Calder\'on conjecture (in the smooth case) has been proved for full or local data, (see \cite{LaU, LeU}).

\vspace{0.1cm}
In dimension $n \geq 3$, for real-analytic Riemannian manifolds or for compact connected Einstein manifolds, the anisotropic Calder\'on conjecture has also been settled positively in \cite{GSB, LeU, LaU, LaTU}. 

\vspace{0.1cm}
In the case of smooth rather than analytic metrics, the anisotropic Calder\'on conjecture is still a major open problem, both for full and local data. 
Some important uniqueness results have nevertheless been obtained in the \cite{DSFKSU, DSFKLS, GSB, KS1} for conformally transversally anisotropic manifolds. In constrast, at any fixed frequency $\lambda$ and in the case of partial data measured on \emph{disjoint sets}, the anisotropic conjecture 
has been answered negatively in \cite{DKN2, DKN3, DKN4}.

\vspace{0.1cm}
There are also several important series of papers dealing with the Calder\'on problem for \emph{singular} conductivities.  In dimension $2$, Astala and P\"aiv\"arinta 
showed that an elliptic  isotropic conductivity belonging in $L^{\infty}(\Omega)$ is uniquely determined by the global DN map, (see \cite{ALP, ALP2, AP}). 
In dimension $n \geq3$, Caro and Rogers also established uniqueness in the global Calder\'on problem for elliptic Lipschitz isotropic conductivities \cite{CaRo}.  
In the case of local data, Krupchyk and Uhlmann in \cite{KrUh1} proved that an isotropic conductivity with $\frac{3}{2}$-derivatives  is uniquely determined by a DN map measured on a possibly very small subset of the boundary. There are also important counterexamples to uniqueness due by Greenleaf, Kurylev, Lassas and Uhlmann (see \cite{ALP2, GLU, U2} ) for  metrics which become degenerate along a closed hypersurface.

\vspace{0.1cm}
Finally we mention \cite{DKN2020}, where one shows in dimension $n \geq 3$ that there is non-uniqueness for the 
Calder\'on problem with local data for Riemannian metrics with H\"older continuous coefficients. One constructs a H\"older continuous metric $g$ in a manifold diffeomorphic to a toric cylinder, and shows that there exist in the conformal class of $g$ an infinite number of Riemannian metrics $\tilde{g} = c^4 g$ which are not gauge equivalent to $g$ and for which the DN maps coincide for local data. The corresponding smooth conformal factors are chosen to be harmonic with respect to the metric $g$ and do not satisfy the unique 
continuation principle. We emphasize that this approach cannot be extended to the case of global data precisely because it is required that the frequency $\lambda$ be zero, (see Remark 1.3 in \cite{DKN2020} for an explanation).

\vspace{0.1cm}
 In contrast the main goal of the present paper is to find counterexamples to uniqueness for the {\it{global}} modified Calder\' on conjecture, that is Conjecture \ref{ModConj} at a frequency $\lambda \not=0$.

\Section{Statement of the main result.}\label {Main}

With the above preliminaries at hand, we are now ready to state our main result. Before doing so, it is convenient to introduce the following definition:

\begin{defi}\label{isomclose}
Given $k\geq 0$ and $\epsilon >0$ we say that the conductivities 
%$\gamma_1$ and $\gamma_2$ in $C^k(\overline{\Omega}, \mathcal S_n)$ are $(\epsilon,k)$-close if 
%$$ 
% ||\gamma_2-\gamma_1 ||_{C^k(\overline{\Omega},\mathcal S_n)} \leq \epsilon\,,
% $$
%and that these conductivities are $(\epsilon,k)$-close up to isometry if there exists $\Psi \in {\rm SDiff}(\overline{\Omega})$ such that 
are $(\epsilon,k)$-close if  
$$
 ||\gamma_2-\gamma_1 ||_{C^k(\overline{\Omega},\mathcal S_n)} \leq \epsilon\,.
$$
%such that $\gamma_1$ and $\Psi_* \gamma_2$ are $(\epsilon,k)$-close.\\
\end{defi}

\vspace{0.3cm}\noindent
The main result of our paper is then the  following : 

\vspace{0.2cm}

\begin{thm} \label{Main0}
         Let $\Omega \subset \R^n, \ n \geq 3$  be a smooth bounded domain and let $\gamma$ be a smooth conductivity in $\overline{\Omega}$. Let us consider 
         $\lambda_0 \neq 0$ which does not belong to the Dirichlet spectrum of $L_\gamma$. 
         %\parn
         Then, for any $k\geq 1$ and $\epsilon >0$
          there exists a pair of non-isometric
         conductivities $(\gamma_1, \gamma_2)$ on $\overline{\Omega}$ of class $C^k$, which are $(\epsilon,k)$ close to $\gamma$ and satisfy
	\begin{equation}
		\Lambda_{\gamma_1,\lambda_0} = \Lambda_{\gamma_2,\lambda_0}.
	\end{equation}
 \end{thm}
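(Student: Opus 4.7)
My plan exploits the gap between the gauge group $\mathrm{SDiff}(\overline{\Omega})$ at non-zero frequency and the much larger group of boundary-fixing diffeomorphisms. Since a generic diffeomorphism $\Psi$ with $\Psi|_{\partial\Omega}=\mathrm{Id}$ and (say) $D\Psi|_{\partial\Omega}=\mathrm{Id}$ fails to be volume-preserving, Lemma \ref{gaugeinvariance} tells us that $\Lambda_{\Psi_*\gamma,\lambda_0}$ is precisely the Dirichlet-to-Neumann map of the Schr\"odinger-type equation
\begin{equation*}
	L_\gamma \tilde u \;=\; \lambda_0\, |\det D\Psi(x)|\, \tilde u,
\end{equation*}
so the deviation from $\Lambda_{\gamma,\lambda_0}$ is governed entirely by the ``potential'' $V_\Psi := \lambda_0(|\det D\Psi|-1)$. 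I would first recast the whole problem in this Schr\"odinger-type language, reducing the question to one about equality of DN maps for operators $L_\gamma - V_{\Psi_j}$ with prescribed Jacobian profiles.

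Next, I would look for two distinct diffeomorphisms $\Psi_1\neq \Psi_2$, both close to the identity in $C^k$, both equal to the identity to high order on $\partial\Omega$, and neither belonging to $\mathrm{SDiff}(\overline{\Omega})$, such that the induced potentials $V_{\Psi_1}, V_{\Psi_2}$ yield \emph{identical} DN maps for $L_\gamma-V$. Setting $\gamma_j := \Psi_{j\,*}\gamma$ would then give the desired pair, each $(\epsilon,k)$-close to $\gamma$ provided $\Psi_j$ is sufficiently close to the identity. To find such a pair I would set up an infinite-dimensional perturbation argument: parametrize a family $\Psi_s$ of non-unimodular diffeomorphisms by $s$ in an open subset of a $C^k$-space, compute the linearization at $s=0$ of the map $s\mapsto \Lambda_{\Psi_{s\,*}\gamma,\lambda_0}$, and exhibit a non-trivial kernel. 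The $C^k$-but-not-smoother restriction is essential here: boundary determination plus standard unique-continuation arguments impose rigidity in the smooth category, so the kernel must consist of genuinely non-smooth perturbations, in the spirit of the harmonic-but-not-uniquely-continuable conformal factors constructed in \cite{DKN2020}. A one-parameter curve inside this kernel then provides infinitely many candidate pairs.

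Finally, I would verify non-isometry of $\gamma_1$ and $\gamma_2$ via a $\mathrm{SDiff}$-invariant that distinguishes orbits (for instance, the volume density in fixed local coordinates, or an appropriate integral functional of $\gamma$), showing that generically within the family the candidates sit in distinct orbits of $\mathrm{SDiff}(\overline{\Omega})$. The main obstacle I anticipate is the actual construction of the non-trivial kernel of the linearized DN map in the $C^k$ topology: the analogous construction in \cite{DKN2020} was carried out for $\lambda=0$ with local boundary data, whereas here one needs global data at $\lambda_0\neq 0$, a strictly harder regime because global smooth data at non-zero frequency tend to force uniqueness by analytic or unique-continuation arguments. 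Overcoming this will likely require a careful ansatz in which the $C^k$-but-not-smoother behavior is localized inside $\Omega$ so that standard unique-continuation arguments simply do not apply, combined with an implicit-function argument to realize the kernel direction as an exact curve of DN-equivalent conductivities rather than merely an infinitesimal one.
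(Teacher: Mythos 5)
Your reduction via Lemma \ref{gaugeinvariance} to the weighted equation $L_\gamma \tilde u = \lambda_0\,|\det D\Psi|\,\tilde u$ is correct, but restricting the candidate pair to two pushforwards $\Psi_{1*}\gamma,\ \Psi_{2*}\gamma$ of the \emph{same} $\gamma$ creates a gap you do not close. If $|\det D\Psi_1|=|\det D\Psi_2|$ pointwise then $\Psi_2\circ\Psi_1^{-1}\in{\rm SDiff}(\overline\Omega)$ and the two conductivities are isometric, hence useless; so you would need two genuinely \emph{different} weights $w_j=|\det D\Psi_j|$ producing the same DN map for $L_\gamma u=\lambda_0 w u$. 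That is a Calder\'on-type uniqueness question for a zeroth-order coefficient, exactly the kind of problem that tends to have a positive (uniqueness) answer, and your proposed "linearize and exhibit a kernel" step is precisely the missing construction — you acknowledge you do not know how to produce it. The paper sidesteps this entirely by pairing a pushforward with a \emph{conformal rescaling}: using the identity ${\rm div}(c^2\gamma\nabla v)=c[{\rm div}(\gamma\nabla(cv))-{\rm div}(\gamma\nabla c)v]$, a conformal factor $c$ solving the nonlinear equation ${\rm div}(\gamma\nabla c)+\lambda(c-\tfrac1c+cf)=0$ with $c=1$, $\gamma\nabla c\cdot\nu=0$ on $\partial\Omega$ turns $L_{c^2\gamma}v=\lambda v$ into the weighted equation with weight $1+f$, while the Dacorogna--Moser/Csat\'o--Dacorogna--Kneuss theorem supplies a boundary-fixing $\Psi$ with $|\det D\Psi|=1+f$ realizing the \emph{same} weight. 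Both conductivities $c^2\gamma$ and $\Psi_*\gamma$ thus reduce to one and the same weighted problem, so equality of DN maps is automatic rather than something to be engineered. (A numerical-range argument is then needed to choose $c=(1+\epsilon u)^\alpha$ and a normalization so that the frequency lands exactly on the prescribed $\lambda_0$.) Note also that the natural ${\rm SDiff}$-invariant $\int_\Omega(\det\gamma)^{1/(n-2)}dx$ is in fact invariant under \emph{all} boundary-fixing pushforwards, so it is blind to your family $\{\Psi_*\gamma\}$; it detects non-isometry in the paper precisely because one member of the pair is a conformal rescaling, which shifts this integral at order $\epsilon^2$.

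Your diagnosis of where the $C^k$ restriction comes from is also off target. In the paper's construction every ingredient ($u$, $c$, $f$, the weight $1+f$) is $C^\infty$; the only loss of regularity occurs because the solution of the prescribed volume form equation $|\det D\Psi|=1+f$ is delivered only in $C^{k+1,\alpha}$, so $\Psi_*\gamma$ is only $C^k$. No failure of unique continuation is invoked or needed — on the contrary, the paper \emph{uses} unique continuation (valid for Lipschitz principal coefficients) to show that the mechanism degenerates when $f=0$ or $\lambda=0$, which is exactly why the non-zero frequency and the non-unimodular diffeomorphism are essential. So your anticipated obstacle ("global smooth data at non-zero frequency force uniqueness by unique continuation, hence the perturbation must be non-smooth and localized") misidentifies the mechanism, and the remedy you propose would not lead to the construction that actually works.
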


In other words, we have found counterexamples to uniqueness for the modified anisotropic Calder\' on problem with conductivities of arbitrary regularity.

\Section{Conformal rescalings of conductivities and adapted diffeomorphisms.}

The non-uniqueness results for the anisotropic Calder\' on problem stated in this paper are based on {\it{both}} usual conformal invariances for Schr\" odinger operators which can be 
written in {\it{div-grad}} form, and on transformations by diffeomorphisms obtained in Lemma \ref{gaugeinvariance}. We have followed more or less the same strategy 
as in \cite{DKN2020} in the Riemannian setting. 
%{\clb Si on reste avec le formalisme (EIT), dire que dans le cadre Riemannien, on n'utilise pas la transformation par diff�o car le mod�le est covariant ou contravariant (je ne sais plus).}

\vspace{0.2cm}
\parn 
In this section, we assume that the conformal factor $c$ and the anisotropic conductivity $\gamma$ are smooth. 
We recall that one has the well-known  conformal identity:
\begin{equation}\label{conformalinv}
	{\rm div\,} (c^2 \gamma \nabla v ) = c \left[ {\rm div\,} (\gamma \nabla (cv)) - {\rm div\,} (\gamma \nabla c) v \right].
\end{equation}	
Thus, if we assume that $v$ satisfies  
\begin{equation}
	- {\rm div\,} (c^2 \gamma \nabla v ) = \lambda v,
\end{equation}
we get immediately 
\begin{equation}\label{conf}
	-  {\rm div\,} (\gamma \nabla (cv)) +  \frac{1}{c}  \left( {\rm div\,} (\gamma \nabla c) + \lambda (c- \frac{1}{c}) \right) (cv ) = \lambda (cv).
\end{equation}

%%%%%%%%%%%%%%%%%%%%%%%%%%%
\parn
We can obviously rewrite (\ref{conf}) as
\begin{equation}
	-  {\rm div\,} (\gamma \nabla (cv)) +  \frac{1}{c}  \left( {\rm div\,} (\gamma \nabla c) + \lambda (c- \frac{1}{c} +cf) \right) (cv ) = \lambda (1+f) (cv)\,,
\end{equation}
for any  $f \in C^{\infty}(\overline{\Omega})$. (We will choose $f$ below in order to apply Lemma \ref{gaugeinvariance} with a suitable diffeomorphism $\Psi :  \overline{\Omega} \to \overline{\Omega}$ depending on $f$.) If we assume now that $c$ satisfies
\begin{equation}\label{edpnonlinear}
 {\rm div\,} (\gamma \nabla c) + \lambda (c- \frac{1}{c} +cf) =0\,,
\end{equation}
we get immediately:
\begin{equation}\label{simplification}
-  {\rm div\,} (\gamma \nabla (cv))  = \lambda (1+f) (cv)\,.
\end{equation} 

\vspace{0.1cm}
\parn
Now, we assume that the function $f$ also satisfies for some {\it{fixed}} $\alpha \in (0,1)$,
\begin{equation}\label{propf}
	\int_{\Omega} f(x) \ dx = 0 \ ,  \ 
 ||f||_{0,\alpha} \leq \epsilon \ ,
\end{equation} 
where $\epsilon>0$ is small enough, and  $||  \cdot ||_{k, \alpha}$ denotes the usual norm in the H\"older spaces $C^{k, \alpha} (\overline{\Omega})$. In particular, we see that $1+f \geq \frac{1}{2} \  {\rm{in}} \  \overline{\Omega}$. Under the assumptions (\ref{propf}), 
 there exists for all $ k \in \N$, a $C^{k+1, \alpha}$ diffeomorphism $\Psi :  \overline{\Omega} \to \overline{\Omega}$ such that $\Psi= Id$ on $\partial \Omega$ and 
$|{\rm det} \ D \Psi| = 1+f$ on $\Omega$. Moreover, we have the following estimates:
\begin{equation}\label{normdiffeo}
	|| \ \Psi - {\rm Id} \  ||_{k+1, \alpha} \leq C_{k} \ ||f||_{k, \alpha},
\end{equation}
where the constant $C_{k}$ only depends on $k$ and $\Omega$, (\cite{CDK}, Theorem 10.9, see also \cite{DaMo, RiYe}). 
Thus, using Lemma \ref{gaugeinvariance}, we see that (\ref{simplification}) can be written in the simpler form :
\begin{equation}\label{simplification1}
-  {\rm div\,} (\Psi_* \gamma \nabla w )  = \lambda w\,\, \mbox{ with } \,\, w = (cv) \circ \Psi^{-1}\,. 
\end{equation}

\begin{rem}\label{equiv}
In particular, since the diffeomorphism $\Psi$ restricts to the identity on the boundary $\partial \Omega$, the previous calculation shows that, under the assumptions (\ref{edpnonlinear}) and (\ref{propf}), $\lambda$ is a Dirichlet eigenvalue of the operator  $L_{c^2\gamma}$ if and only if $\lambda$ is a Dirichlet eigenvalue of $L_{\Psi_* \gamma}$. 	
\end{rem}

\vspace{0.1cm}

We therefore immediately get the following result using the definition of the DN map in the smooth case, given in (\ref{DNmapstrong}).

\vspace{0.2cm}
\parn
\begin{prop}\label{smoothcase}
Let $\gamma$ be a smooth conductivity and let $c \in C^{\infty} (\overline{\Omega})$ be a positive conformal factor such that 
\begin{subequations}
\begin{equation} 
 c=1\;, \; \gamma \nabla c \cdot \nu =0 \mbox{ on } \partial \Omega \end{equation}
 and 
\begin{equation}\label{eqnonuniqueness}
{\rm div\,} (\gamma \nabla c) + \lambda (c- \frac{1}{c} +cf)= 0 \ \ {\rm{on}} \ \Omega,
\end{equation}
\end{subequations}
where $f \in C^{\infty}(\overline{\Omega})$ satisfies for some $\alpha \in (0,1)$ and $\epsilon$ small enough,
\begin{equation*}\label{propf1}
	\int_{\Omega} f(x) \ dx = 0 \ ,  \ 
	||f||_{0,\alpha} \leq \epsilon .
\end{equation*} 
Then, there exists for all $k\geq 1$, a $C^{k+1, \alpha}$ diffeomorphism $\Psi : \overline{ \Omega} \to \overline{ \Omega}$ which is equal to the identity on $\partial \Omega$ such that, 
if $\lambda$ is not a Dirichlet eigenvalue of  $L_{c^2\gamma}$, one has:
\begin{equation}\label{eqDNmap}
	\Lambda_{c^2 \gamma,\lambda} = \Lambda_{\Psi_*\gamma,\lambda}.
\end{equation}
\end{prop}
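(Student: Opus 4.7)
\medskip
\noindent
\textbf{Proof plan.}
The existence of the diffeomorphism $\Psi$ is a direct invocation of the Dacorogna--Moser-type result cited after (\ref{normdiffeo}): since $\int_\Omega f\,dx=0$, $\|f\|_{0,\alpha}<\epsilon$ with $\epsilon$ small enough guarantees $1+f>0$ and yields a $C^{k+1,\alpha}$ diffeomorphism $\Psi:\overline{\Omega}\to\overline{\Omega}$ with $\Psi_{|\partial\Omega}=\mathrm{Id}$ and $|\det D\Psi|=1+f$. In particular $\Psi_*\gamma$ is a $C^{k,\alpha}$ conductivity, and by Remark \ref{equiv}, $\lambda$ is not a Dirichlet eigenvalue of $L_{\Psi_*\gamma}$ either, so the DN map $\Lambda_{\Psi_*\gamma,\lambda}$ is well-defined.

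The plan is to turn the identities (\ref{simplification}) and (\ref{simplification1}) into a correspondence between solutions with the same boundary data, and then to compare the two DN maps via their weak definitions. Given $h\in H^{1/2}(\partial\Omega)$, let $v\in H^1(\Omega)$ be the unique weak solution of $-\mathrm{div}(c^2\gamma\nabla v)=\lambda v$ with $v_{|\partial\Omega}=h$. By the conformal identity (\ref{conformalinv}) combined with the PDE (\ref{eqnonuniqueness}) satisfied by $c$, the function $cv$ satisfies $-\mathrm{div}(\gamma\nabla(cv))=\lambda(1+f)(cv)$ on $\Omega$. Setting $w:=(cv)\circ\Psi^{-1}$, the relation $|\det D\Psi|=1+f$ together with Lemma \ref{gaugeinvariance} (applied in the reverse direction) gives $-\mathrm{div}(\Psi_*\gamma\nabla w)=\lambda w$ on $\Omega$. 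Since $c=1$ on $\partial\Omega$ and $\Psi_{|\partial\Omega}=\mathrm{Id}$, we have $w_{|\partial\Omega}=(cv)_{|\partial\Omega}=h$, so $w$ is precisely the solution attached to $\Lambda_{\Psi_*\gamma,\lambda}h$.

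It remains to verify $\langle\Lambda_{c^2\gamma,\lambda}h\,|\,g\rangle=\langle\Lambda_{\Psi_*\gamma,\lambda}h\,|\,g\rangle$ for every $g\in H^{1/2}(\partial\Omega)$. Pick any extension $\tilde v\in H^1(\Omega)$ with $\tilde v_{|\partial\Omega}=g$ and set $\tilde w:=(c\tilde v)\circ\Psi^{-1}$, which still satisfies $\tilde w_{|\partial\Omega}=g$. The change-of-variable computation carried out in the proof of Lemma \ref{gaugeinvariance} yields
\begin{equation*}
\int_\Omega \Psi_*\gamma\,\nabla w\cdot\nabla\tilde w\,dy=\int_\Omega \gamma\,\nabla(cv)\cdot\nabla(c\tilde v)\,dx,\qquad \int_\Omega w\tilde w\,dy=\int_\Omega (1+f)(cv)(c\tilde v)\,dx.
\end{equation*}
Subtracting and integrating by parts, using $-\mathrm{div}(\gamma\nabla(cv))=\lambda(1+f)(cv)$, reduces $\langle\Lambda_{\Psi_*\gamma,\lambda}h\,|\,g\rangle$ to the boundary term $\int_{\partial\Omega}\gamma\nabla(cv)\cdot\nu\,(c\tilde v)\,dS$. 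Performing the symmetric computation for $\Lambda_{c^2\gamma,\lambda}$ gives $\int_{\partial\Omega}c^2\gamma\nabla v\cdot\nu\,\tilde v\,dS$. The boundary conditions $c=1$ and $\gamma\nabla c\cdot\nu=0$ on $\partial\Omega$, together with the expansion $\nabla(cv)=c\nabla v+v\nabla c$, force both boundary integrals to equal $\int_{\partial\Omega}(\gamma\nabla v)\cdot\nu\,g\,dS$, proving the equality (\ref{eqDNmap}).

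The main subtlety is the last step: verifying that the two boundary terms coincide. This is precisely where the two boundary hypotheses on $c$ are used in an essential way. The condition $c_{|\partial\Omega}=1$ kills the conformal rescaling on $\partial\Omega$, so that $c^2\gamma=\gamma$ there, and $\gamma\nabla c\cdot\nu=0$ removes the spurious term $v\,\gamma\nabla c\cdot\nu$ that would otherwise appear when differentiating $cv$ in the normal direction. Everything else (the existence of $\Psi$, the reduction via the conformal identity, and the pullback via Lemma \ref{gaugeinvariance}) has essentially already been carried out in the preceding computations of the section.
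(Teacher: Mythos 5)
Your proof is correct and takes essentially the same route as the paper, which likewise assembles the conformal identity, the Dacorogna--Moser diffeomorphism with $|\det D\Psi|=1+f$ from (\ref{normdiffeo}), and Lemma \ref{gaugeinvariance} to send solutions for $c^2\gamma$ to solutions for $\Psi_*\gamma$ with the same boundary data, and then concludes from the definition of the DN map. The only (harmless) difference is that the paper invokes the strong normal-derivative definition (\ref{DNmapstrong}) directly, whereas you check the equality through the weak pairing and an integration by parts, using the boundary conditions $c=1$ and $\gamma\nabla c\cdot\nu=0$ on $\partial\Omega$ in exactly the same way.
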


We conclude with the important remark that in the case where $f=0$ or $\lambda=0$, the equality (\ref{eqDNmap}) will not lead to counterexamples to uniqueness since the unique solution of the non linear equation (\ref{eqnonuniqueness}) with the above boundary conditions  is  $c=1$. Indeed, if we set  $ d=c-1$ and 
\begin{equation}
		V = \lambda  \left( \frac{c+1}{c} \right),
\end{equation}
we see that (\ref{eqnonuniqueness})  can be written as
\begin{equation}\label{eqd}
	{\rm div\,} ( \gamma \nabla d ) + Vd  =0 \ \ {\rm{on}} \ \ \Omega,
\end{equation}
 with $d=0$ and $\gamma \nabla d \cdot \nu  =0$ on $\partial \Omega$. Then, it follows from the unique continuation principle, (see for instance \cite{Hor1, Hor2, Tat}),  
 that the unique solution of (\ref{eqd}) is $d=0$, (or equivalently $c=1$). We recall that in dimension $n \geq 3$, the unique continuation property holds for uniformly elliptic operators on a domain $\Omega$ if the coefficients of the principal part of this operator are locally Lipschitz continuous. 
 Note in contrast that in \cite{DKN2020}, a {\it{local}}  nonuniqueness result for the Calder\' on problem was established 
 in the case of a metric with H\"older continuous coefficients.

%%%%%%%%%%%%%%%%%%%%%%%%%%%%%%%%%%%%%%%%%%%%%%%%%%%%%%%%%%%%%%%%%%%

\section{Proof of Theorem \ref{Main0}.}

\subsection{Numerical range}

\vspace{0.1cm}

Let $\Omega \subset \R^n,\ n \geq 3$, be a smooth bounded domain and let $\gamma$ be a fixed smooth conductivity. Let us begin by an elementary result on the numerical range  with constraints of $L_{\gamma}$. 
In the following, $\lambda_1$ denotes the first eigenvalue of the Dirichlet realization of $L_\gamma$ on $\Omega$. $\lambda_2$ is the second one and we recall that $\lambda_1 < \lambda_2$.
More generally we denote by  $(\lambda_k)$ ($k\geq 1$) the non-decreasing sequence of eigenvalues counted with multiplicity.

\begin{lemma}\label{interval}
	Let $W(L_\gamma)$ the numerical range  with constraints of $L_\gamma$ defined as
	\begin{equation}\label{numerical}
		W(L_\gamma ) = \{ <L_\gamma u , u > \ ; u \in X \},
	\end{equation}
where we have set
\begin{equation}\label{contrainte} 
	X = \{ u \in H_0^1(\Omega, \R)\ ,\ ||u||_2 =1 \ ,\ \int_{\Omega} u(x) \ dx =0 \} \,.
\end{equation}
Then, we have:
\begin{enumerate}
\item $W(L_\gamma)$ is a closed interval in $(\lambda_1,+\infty)$.
	\item More precisely, $W(L_\gamma) = [m, +\infty)$  with $m:= {\rm Inf}\  W(L_\gamma)$ satisfying $\lambda_1 < m\leq \lambda_2$.
\end{enumerate}
\end{lemma}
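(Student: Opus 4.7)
The plan is to prove the four sub-claims in the natural order: (a) the infimum $m$ is attained and $m>\lambda_1$; (b) $m\leq \lambda_2$; (c) $W(L_\gamma)$ is unbounded above; (d) $W(L_\gamma)$ is connected, hence the interval $[m,+\infty)$.

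For (a), I would take a minimizing sequence $(u_n)\subset X$ for $\langle L_\gamma u_n,u_n\rangle$. The quadratic form coercivity (ellipticity of $\gamma$ plus Poincar\'e) gives a uniform $H^1_0$-bound, so, up to extraction, $u_n\rightharpoonup u$ weakly in $H^1_0$ and strongly in $L^2$ by the Rellich--Kondrachov theorem. Strong $L^2$ convergence preserves both constraints $\|u\|_2=1$ and $\int_\Omega u\,dx=0$, so $u\in X$, and the lower semi-continuity of the quadratic form yields $\langle L_\gamma u,u\rangle\leq m$, so the infimum is achieved. To see $m>\lambda_1$, note that a minimizer $u\in X$ satisfies $\langle L_\gamma u,u\rangle\geq \lambda_1$ by the Rayleigh quotient characterization; if equality held, $u$ would be an eigenfunction associated with the simple eigenvalue $\lambda_1$, i.e., a scalar multiple of the first eigenfunction $\phi_1$. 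But $\phi_1$ is of constant sign (by Krein--Rutman or the standard Perron argument), so $\int_\Omega\phi_1\,dx\neq 0$, contradicting $u\in X$. Hence $m>\lambda_1$.

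For (b), I would build an explicit test function in the plane $\mathrm{span}(\phi_1,\phi_2)$. Setting $\psi=\phi_2-c\phi_1$ with $c=(\int_\Omega\phi_2\,dx)/(\int_\Omega\phi_1\,dx)$ (well-defined since $\int\phi_1\neq 0$), we get $\int_\Omega\psi\,dx=0$ and, after normalization $u=\psi/\|\psi\|_2$, $u\in X$ with
\begin{equation*}
\langle L_\gamma u,u\rangle=\frac{\lambda_2+c^2\lambda_1}{1+c^2}\leq \lambda_2\,.
\end{equation*}
For (c), the analogous construction with $\phi_k$ in place of $\phi_2$ yields $u_k\in X$ with $\langle L_\gamma u_k,u_k\rangle=(\lambda_k+c_k^2\lambda_1)/(1+c_k^2)$. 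The Cauchy--Schwarz bound $|c_k|\leq |\Omega|^{1/2}/|\int\phi_1|$ keeps $c_k$ bounded, so these values tend to $+\infty$ as $\lambda_k\to+\infty$; hence $W(L_\gamma)$ has no upper bound.

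For (d), the key observation is that $X$ is path-connected: it is the unit sphere of the closed infinite-dimensional subspace $\{u\in H^1_0(\Omega,\mathbb{R}):\int_\Omega u\,dx=0\}$ in the $H^1_0$ topology, and the unit sphere of any infinite-dimensional normed space is arcwise connected. Since the functional $u\mapsto\langle L_\gamma u,u\rangle$ is continuous on $X$ in the $H^1_0$ topology, $W(L_\gamma)$ is a connected subset of $\mathbb{R}$ containing $m$ and arbitrarily large values; combined with the attainment of $m$ established in (a), this forces $W(L_\gamma)=[m,+\infty)$. The only subtle point is the connectedness of $X$ in the $H^1_0$ topology, but this reduces to a standard fact on spheres of infinite-dimensional Banach spaces, so no genuine obstacle is expected; the main technical content lies in the compactness argument of step (a) and the sign argument that rules out $m=\lambda_1$.
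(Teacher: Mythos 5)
Your proof is correct, and on two of the four points (the bound $m\leq\lambda_2$ via $\phi_2-c\,\phi_1$, and the unboundedness via $\phi_k-c_k\phi_1$ with $c_k$ bounded by Cauchy--Schwarz) it coincides exactly with the paper's argument. The differences are in the other two points. For connectedness, the paper runs a Toeplitz--Hausdorff-type argument: it renormalizes $L_\gamma$ to $S=\alpha\,\mathrm{Id}+\beta L_\gamma$ and applies the Intermediate Value Theorem to $\tau\mapsto\langle S w_\tau,w_\tau\rangle$ along the normalized segment $w_\tau$; you instead invoke path-connectedness of $X$ plus continuity of the quadratic form. These are the same idea in different packaging (the connecting paths are the identical normalized segments), but be careful with your phrasing: $X$ is the \emph{$L^2$}-unit sphere of the subspace $\{u\in H^1_0:\int_\Omega u=0\}$, not its $H^1_0$-unit sphere, so the ``unit sphere of an infinite-dimensional normed space'' fact does not apply verbatim in the $H^1_0$ topology; what saves you is that the explicit path $t\mapsto (tu+(1-t)v)/\|tu+(1-t)v\|_2$ is manifestly $H^1_0$-continuous whenever $u\neq -v$ (and the value of the form is the same at $u$ and $-v$ anyway). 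Where your write-up genuinely improves on the paper is the left endpoint: the paper simply asserts that the interval is closed and deduces $m>\lambda_1$ from the bare fact that the positive ground state does not lie in $X$, which as stated is incomplete without either attainment of the infimum or a spectral-gap estimate. Your direct-method argument (coercivity, weak $H^1_0$-compactness, Rellich to preserve the two constraints, weak lower semicontinuity of the form) establishes attainment, and the simplicity and sign-definiteness of the first eigenfunction then rule out $m=\lambda_1$ cleanly. So your route buys a rigorous proof of $W(L_\gamma)=[m,+\infty)$ with $m$ attained, at the cost of a compactness argument the paper elides.
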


\begin{proof}
We follow the same strategy as in \cite{Ha,To}. Let $\lambda, \mu \in W(L_\gamma)$ be two distincts points and let $t \in [0,1]$. First, we shall show that $t \lambda + (1-t)\mu  \in  W(L_\gamma )$. 
By definition, there exist $u, v \in X$ such that 
$\lambda = <L_\gamma u, u>$ and $\mu = <L_\gamma v, v>$. Then $u, v$ are linearly independent. It follows that $tu + (1-t)v \not=0$ and we can set:
\begin{equation}
 w_t = \frac{tu + (1-t)v}{||tu + (1-t)v||_2} \in X.
\end{equation}
Let $$S = \alpha\, {\rm  Id } + \beta L_{\gamma}$$ where 
\begin{equation}
 \alpha = - \frac{\mu}{\lambda-\mu} \ \ ,\ \ \beta = \frac{1}{\lambda-\mu}.
\end{equation}
Let $W(S)$ be the numerical range of $S$ with the same form domain $X$.
\vspace{0,1cm}\parn
We claim that if $t \in W(S)$ then $  t\lambda + (1-t) \mu \in W(L_\gamma)$  Indeed, if  $ t \in W(S)$, then there exists $g \in X$ such that 
\begin{equation}
 t = <Sg,g> = \alpha + \beta <L_\gamma g, g>.
\end{equation}
Then, 
\begin{eqnarray}
 t \lambda + (1-t)\mu  &=& (\lambda-\mu) (\alpha + \beta <L_\gamma g, g>) + \mu \nonumber \\
 &=& (-\mu + <L_\gamma g, g> ) + \mu = <L_\gamma g, g> \ \in W(L_\gamma).
\end{eqnarray}

\vspace{0.1cm}\parn
Now, we consider the continuous map $f :[0,1] \to W(S)$ defined by $f(\tau) = <S w_\tau, w_\tau>$. Clearly, one has $f(0)=0$, $f(1)=1$, and using the Intermediate Value Theorem, we get  $t \in [0,1] \subset W(S)$. Thus, $W(L_\gamma)$ is an interval, and we easily see that this interval is closed.

\vspace{0.2cm}\parn
It remains to show that $W(L_\gamma)$ is not bounded. We denote  by $(u_k)$, ($k\geq 1$), an orthonormal basis of eigenfunctions associated with $(\lambda_k)$. For $k\geq 2$, we set
\begin{equation}
 v_k = \frac{1}{\sqrt{1+ \alpha_k^2}} \ (u_k - \alpha_k u_1 ),
\end{equation}
where we have set
\begin{equation}
 \alpha_k = \frac{\int_\Omega u_k (x) \ dx}{ \int_\Omega u_1 (x) \ dx}\,.
\end{equation}
Hence $v_k\in X$ and a  straighforward calculation gives
\begin{equation}
 <L_\gamma v_k , v_k > = \frac{\lambda_k + \alpha_k^2 \lambda_1}{1+ \alpha_k^2} .
\end{equation}
Taking $k=2$ this shows that $m \leq \lambda_2$, with equality if $u_2 \in X$. The inequality $m>\lambda_1$ is a consequence of the fact that the ground state $u_1$ does not vanish in $\Omega$
 hence does not belong to $X$. \\
Considering $k\rightarrow + \infty$, we observe that the sequence $(\alpha_k)$ is bounded and  we get 
\begin{equation}
 \lim_{k \to + \infty} <L_\gamma v_k , v_k >  = + \infty .
\end{equation}
\end{proof}

\begin{rem}\label{Rem4.2}
%\item The infimum $m$ is actually a minimum.
If we set 
\begin{equation}\label{contraintea}
	X_\infty = \{ u \in C_0^\infty(\Omega, \R)\ ,\ ||u||_2 =1 \ ,\ \int_{\Omega} u(x) \ dx =0 \} \ ,
\end{equation}
then one has:
\begin{equation} (m,+\infty)=\{ <L_\gamma u , u > \ ; u \in X_\infty \},
	\end{equation}
(we use that $X_\infty$ is dense in $X$ and that the set $\{ <L_\gamma u , u > \ ; u \in X_\infty \}$ is an interval).
\end{rem}

\subsection{End of the proof of the main theorem}

%%%%%%% debut de la nouvelle preuve %%%%%%%%%%%%
\vspace{0.5cm}
\parn
Now, we able to complete the proof of  Theorem \ref{Main0}. Let us consider a fixed $\lambda_0 \not=0$ which does not belong to the Dirichlet spectrum of $L_\gamma$. We have to consider two cases :

\vspace{0.2cm}\parn
{\it Case 1 : Assume that $\lambda_0 >0$.}

\vspace{0.2cm}\parn
Let $\alpha$ be any positive real such that $\frac{\alpha m}{2\alpha+1} < \lambda_0$. Then, using Lemma \ref{interval}, we see that 
$$\lambda_0 \in (\frac{\alpha m}{2\alpha+1}, +\infty) =  W(\frac{\alpha}{2\alpha+1} L_\gamma)\,.
$$
In particular, using also Remark \ref{Rem4.2},  there exists $u \in X_\infty$  such that
\begin{equation}\label{defu}
	\lambda_0 = \frac{\alpha}{2\alpha+1} \ <L_\gamma u, u >.
\end{equation}
Now, for $\epsilon >0$ small enough, we define the positive conformal factor $c_\epsilon (x)$ on $\overline{\Omega}$ by
\begin{equation}\label{cfactor}
	\cea (x) = (1 + \epsilon u(x))^\alpha.
\end{equation}
Clearly, this conformal factor satisfies $\cea (x) =1, \ (\gamma \nabla \cea (x)) \cdot \nu =0$ on $\partial \Omega$. For a suitable frequency $\lambda_{\epsilon, \alpha}>0$ to be defined later, we set:
\begin{equation}\label{deff}
	\fea (x) = -\frac{1}{\lambda_{\epsilon, \alpha} \ \cea} \ {\rm div}  (\gamma \nabla \cea) +  \frac{1}{\cea^2} -1.
\end{equation}
Thus, by construction, the non-linear PDE (\ref{eqnonuniqueness}) is obviously  satisfied and we have $\fea \in C^{\infty}(\overline{\Omega})$. 
Now, we construct the frequency $\lambda_{\epsilon, \alpha}>0$ in order to satisfy
\begin{equation}
	\int_\Omega \fea (x) \ dx = 0.
\end{equation}
We get immediately:
\begin{equation} 
	\frac{1}{\lambda_{\epsilon, \alpha}} \int_\Omega {\rm div}  (\gamma \nabla \cea) \ \frac{1}{ \cea }\ dx \ = \  \int_\Omega  \left( \frac{1}{\cea^2}-1 \right) \ dx,
\end{equation}
or equivalenty using the usual Green formula,
\begin{equation}\label{deflambda}
	\frac{1}{\lambda_{\epsilon, \alpha}} \int_\Omega \frac{\gamma \nabla \cea \cdot \nabla \cea}{c_\epsilon^2} \ dx \ = \ \int_\Omega  \left( \frac{1}{\cea^2} -1 \right) \ dx.
\end{equation}
Now,  since $ u \in X_\infty$, one observes that, when $\epsilon \to 0$, 
\begin{equation}
\int_\Omega  \left( \frac{1}{\cea^2}-1 \right) \ dx   = \alpha (2\alpha+1) \epsilon^2  + O(\epsilon^3),
\end{equation}
and we have
\begin{eqnarray}
\int_\Omega \frac{\gamma \nabla \cea \cdot \nabla \cea}{\cea^2} \ dx &=& \alpha ^2 \epsilon^2 \int_\Omega \gamma \nabla u \cdot \nabla u \ dx + O(\epsilon^3)  \nonumber \\
                                                                    &=& \alpha ^2 \epsilon^2  <L_\gamma  u , u>  + O(\epsilon^3).
\end{eqnarray}
It follows that, for $\epsilon>0$ small enough,  we can define $\lambda_{\epsilon, \alpha}$ as 
\begin{equation}\label{deffrequence}
\lambda_{\epsilon, \alpha} =  \frac{   \int_\Omega \frac{\gamma \nabla \cea \cdot \nabla \cea}{\cea^2} \ dx }{\int_\Omega  \left( \frac{1}{\cea^2}-1 \right) \ dx},
\end{equation}
and we get the asymptotic expansion:
\begin{equation} \label{asymptlambda}
	\lambda_{\epsilon, \alpha}  = \frac{\alpha}{2\alpha +1} \  <L_\gamma  u , u>  + O(\epsilon) = \lambda_0 + O(\epsilon).
	\end{equation}
%{\clr Il faudrait calculer le terme suivant du d\'eveloppement pour construire explicitement $\epsilon \mapsto \lambda_\epsilon$ !}

\parn
In particular, if $\epsilon$ is small enough , we see that $\lambda_{\epsilon, \alpha}  >0$, and we have for all $k \in \N$ and  $\beta \in (0,1)$, $||f_{\epsilon,\alpha}||_{k, \beta} = O(\epsilon)$.
Finally, since the spectrum $\sigma (L_\gamma)$ is discrete and $\cea (x)=1 + O(\epsilon)$, the asymptotics (\ref{asymptlambda}) implies that, for $\epsilon$ small enough, $\lambda$ is not an eigenvalue 
of the operators $L_{\cea \gamma}$, (see Theorem 2.3.3 of  \cite{Hen}).

\vspace{0.2cm}
\parn
As a consequence, using Proposition \ref{smoothcase}, for all $k \in \N$, there exists a $C^{k+1}$ diffeomorphism $\Psi_{\epsilon, \alpha}$ such that :
\begin{equation}\label{eqDNmap1}
	\Lambda_{\cea^2 \gamma,\lea} = \Lambda_{(\Psi_{\epsilon, \alpha})_{*}\gamma,\lea}.
\end{equation}
Moreover, it follows from (\ref{normdiffeo}) and the previous estimates on $c_{\epsilon,\alpha}$ and $f_{\epsilon,\alpha}$ that, for all $k \in \N$,  the conductivities $c_{\epsilon,\alpha}^2 \gamma$ and 
$(\Psi_{\epsilon, \alpha})_{*}\gamma$ are $(\varphi_k (\epsilon),k)$-close to the conductivity $\gamma$, where $\varphi_k (\epsilon) = O(\epsilon)$.

\parn
Now, if we define the new conductivity
\begin{equation}\label{normalisation}
\beta_{\epsilon, \alpha} = \frac{\lambda_0}{\lea} \ \gamma,
\end{equation}
we get obviously:
\begin{equation}\label{eqDNmap3}
	\Lambda_{\cea^2 \beta_{\epsilon, \alpha},\lambda_0} = \Lambda_{(\Psi_{\epsilon, \alpha})_{*} \beta_{\epsilon, \alpha} ,\lambda_0}\,.
\end{equation}
Note that the conductivities $c_{\epsilon, \alpha}^2 \beta_{\epsilon, \alpha}$ and  $(\Psi_{\epsilon, \alpha})_{*} \beta_{\epsilon, \alpha}$ are likewise $(\tilde{\varphi}_k (\epsilon) ,k)$-close with 
$\tilde{\varphi}_k (\epsilon) = O(\epsilon)$.

\vspace{0.5cm}\parn
{\it Case 2 : Assume that $\lambda_0 <0$.}

\vspace{0.2cm}\parn
We follow exactly the same strategy choosing $\alpha \in (-\frac{1}{2}, 0)$ such that $\frac{m\alpha}{2\alpha +1} \in (\lambda_0, 0)$.

%%%%%%%%%%%%%%%%%%%%%%%%%%%%%%%%%%%%%%%%%%%%%%%%%%%%%%%%%%%%%%%%%%%%%%%%%%%%%%%%%%%%%%%%%%%%%%%%%

\section{On the invariance by isometry.}

\begin{comment}
In the last section, we have obtained that, for suitable conformal factors $c$ and diffeomorphisms $\Psi$,
\begin{equation}\label{eqDNmap2}
	\Lambda_{c^2 \gamma,\lambda} = \Lambda_{\Psi_*\gamma,\lambda}.
\end{equation}
\end{comment}

Let $\Psi_0 \in {\rm SDiff} ( \overline{\Omega})$ and  let $\gamma_{1}$, $\gamma_{2}$ be two anisotropic conductivities such that $$\gamma_{2} = (\Psi_{0})_ {* }\gamma_{1}\,.$$
 One gets:
\begin{equation}\label{pullback1}
	\gamma_{2} = \left( \frac{  D\Psi_0 \cdot \gamma_{1} \cdot  (D\Psi_0)^T}{|{\rm det\,}  D\Psi_0 |} \right)  \circ \Psi_0^{-1} = 
	\left( D\Psi_0 \cdot \gamma_1 \cdot  (D\Psi_0)^T \right)  \circ \Psi_0^{-1}.
\end{equation}
It follows immediately that:
\begin{equation}\label{eqdet} 
	({\rm det\,}   \gamma_{2}\circ \Psi_0 )^{\frac{1}{n-2}}=  ({\rm det\,}  \gamma_{1})^{\frac{1}{n-2}}   \ \  {\rm{ in}} \ \  \Omega.
\end{equation}

\vspace{0.1cm}\parn
So, if one integrates (\ref{eqdet}) on $\Omega$, and using the change of variables $y = \Psi (x)$, one obtains:
\begin{equation}\label{invariant1}
	\int_{\Omega} ({\rm det\,} {\gamma_{2}})^{\frac{1}{n-2}} \ dy = \int_{\Omega}  ({\rm det\,} \gamma_{1})^{\frac{1}{n-2}} \ dx.
\end{equation}

\vspace{0,2cm}
\parn
Now, let us assume that the conductivities  $\gamma_1:= (\Psi_{\epsilon, \alpha})_{*} \beta_{\epsilon, \alpha}    $ and $\gamma_2:=  c_{\epsilon, \alpha}^2 \beta_{\epsilon, \alpha}    $ are isometric up to  $\Psi_0\in {\rm SDiff}(\overline{\Omega})$. We easily deduce from  (\ref{pullback}), (\ref{normalisation}), (\ref{invariant1}) and a change of variables  that:
\begin{equation}\label{asympt1}
	\int_{\Omega}  ({\rm det\,} \gamma)^{\frac{1}{n-2}}  \ dx = \int_{\Omega}  \cea^{\frac{2n}{n-2}} (x)  \ ({\rm det\,} \gamma)^{\frac{1}{n-2}}  \ dx. 
\end{equation}
Using the following asymptotic expansion:
\begin{equation}
	\cea^{\frac{2n}{n-2}} (x) = 1 + \frac{2\alpha n}{n-2} \ \epsilon \ u(x) + \frac{ \alpha n}{n-2} \left(\frac{2\alpha n}{n-2} -1\right) \ \epsilon^2 u^2(x) + O(\epsilon^3),
\end{equation}
we see, choosing $\alpha \not = \frac{1}{2} - \frac{1}{n}$, that the term of order $2$ gives the relation:
\begin{equation}
	\int_{\Omega} u^2 (x) \ ({\rm det\,} \gamma)^{\frac{1}{n-2}} \  dx =0,
\end{equation}
which is not possible. Thus, the conductivities  $\gamma_1$ and $\gamma_2$ are not isometric up to a volume-preserving diffeomorphism $\Psi_0$.

We conclude this section by observing that we could not have directly invoked the boundary rigidity theorem of Lionheart \cite{Li} for conformal Riemannian structures on manifolds with boundary in order to arrive at the preceding conclusion since we are working with conductivities rather than Riemannian metrics. Lionheart's proof relies critically on the fact, proved using the theory of $G$-structures of finite type, that conformal automorphisms of a Riemannian metric in dimension $n\geq 3$ are determined by their jets of order $2$, \cite{Ko}. We are not aware of the existence of such a result for non-tensorial objects such as conductivities.

\vspace{0.2cm}
As a conclusion,  we have found counterexamples to uniqueness for the modified  Calder\'on  conjecture at a  
non zero frequency.

\begin{rem}
Assume that the conductivity $\gamma$ is of class $C^k$ in $\overline{\Omega}$ with $k \geq 2$, and let us consider a non-zero  frequency $\lambda_0$ which does not belong to the Dirichlet spectrum of $L_\gamma$. It is not difficult to see that, in this case,  there exists an infinite number of pairs of non-isometric $C^{k-2}$ conductivities $(\gamma_1, \gamma_2)$ having the same DN map at the frequency $\lambda_0$.
\end{rem}

\vspace{0.5cm}
\noindent \textbf{Acknowledgements}: The authors would like to warmly thank Dong Ye for fruitful discussions on the prescribed volume form equation in Section 3.

%%%%%%%%%%%%%%%%%%%%%%%%%%%%%%%%%%%%%%%%%%%%%%%%%%%%%%%%%%%%%%%%%%%%%%%%%%%
%BIBLIOGRAPHY
%%%%%%%%%%%%%%%%%%%%%%%%%%%%%%%%%%%%%%%%%%%%%%%%%%%%%%%%%%%%%%%%%%%%%%%%%%%

%\appendix


\begin{thebibliography}{99}
	
\bibitem{AHG} Alessandrini G., De Hoop M.V., Gaburro R., \emph{Uniqueness for the electrostatic inverse boundary value problem with piecewise constant anisotropic conductivities}, Inverse problems $\mathbf{33}$, no. 12, (2017), 125013.
\bibitem{ALP} Astala K., Lassas M., P\"aiv\"arinta L., \emph{Calder\'on's inverse problem for anisotropic conductivities in the plane}, Comm. Partial Differential Equations $\mathbf{30}$, (2005), 207-224.
\bibitem{ALP2}  Astala K., Lassas M., P\"aiv\"arinta L., \emph{The borderlines of invisibility and visibility in Calder\'on's inverse problem}, Anal. PDE $\mathbf{9}$, no. 1, (2016) 43-98.
\bibitem{AP} Astala K., P\"aiv\"arinta L., \emph{Calder\'on's inverse conductivity problem in the plane}, Annals of Mathematics $\mathbf{163}$, (2006), 265-299.


\bibitem{Bao}  Bao G., Triki F., \emph{Error estimates for the recursive linearization of inverse medium problems}, Journal of Computational Mathematics $\mathbf{28}$, no. 6,  (2010), 725-744.

\bibitem{BeRo}  Behrndt J., Rohleder J., \emph{An inverse problem of Calder\'on type with partial data}, Comm. Partial Differential Equations  $\mathbf{37}$, no. 6,  (2012), 1141-1159.


\bibitem{Ber}  Beretta E., de Hoop M. V., Faucher F., Scherzer O., \emph{Inverse boundary value problem for the Helmholtz equation : quantitative conditional Lipschitz stability estimates}, SIAM J. Math. Anal. $\mathbf{48}$, no. 6,  (2016), 3962-3983.





\bibitem{BuCh}  Bui H.D., Chaillat S., \emph{On a linear inverse problem in viscoelasticity}, Vietnam Journal of Mechanics, $\mathbf{31}$, (2009), 211-219.



\bibitem{CaRo}  Caro P., Rogers K.M., \emph{Global uniqueness for the Calder\'on problem with Lipschitz conductivities}, Forum Math. Pi, $\mathbf{4}$ (2016), e2, 28 pages.

\bibitem{CDK}  Csat\'o G., Dacorogna B., Kneuss O., \emph{The pullback equation for differential forms}, Progress in nonlinear differential equations and their applications, , vol.  $\mathbf{83}$, Springer, (2012).

\bibitem{DaMo}  Dacorogna B., Moser J., \emph{On a partial differential equation involving the Jacobian determinant}, Annales de l' Institut Henri Poincar\' e C, Anal. Non lin\' eaire,  
$\mathbf{7}$, no. $1$, (1990), 1-26.


\bibitem{DKN2} Daud\'e T., Kamran N., Nicoleau F., \emph{Non uniqueness results in the anisotropic Calder\'on problem with Dirichlet and Neumann data measured on disjoint sets}, Annales de l'Institut Fourier (Grenoble), $\mathbf{69}$, no. $1$, (2019), 119-170.
\bibitem{DKN3} Daud\'e T., Kamran N., Nicoleau F.,\emph{On the hidden mechanism behind non-uniqueness for the anisotropic Calder\'on problem with data on disjoint sets},
, Annales Henri Poincar\' e, $\mathbf{20}$, no. $3$, (2019), 859-887.
\bibitem{DKN4} Daud\'e T., Kamran N., Nicoleau F.,\emph{ A survey of non-uniqueness results for the anisotropic Calder\'on problem with disjoint data}, Harvard CMSA Series in Mathematics, Volume 2: Nonlinear Analysis in Geometry and Applied Mathematics, ed. T. Collins and S.-T. Yau, (2018), 77-101.
\bibitem{DKN2020} Daud\'e T., Kamran N., Nicoleau F.,\emph{On non-uniqueness for the anisotropic Calderon problem with partial data}, 	Forum of Mathematics, Sigma, Volume 8, e7, (2020), 17 pages.
\bibitem{DSFKSU} Dos Santos Ferreira D., Kenig C.E., Salo M., Uhlmann G., \emph{Limiting Carleman weights and anisotropic inverse problems}, Invent. Math $\mathbf{178}$ no. 1, (2009), 119-171.
\bibitem{DSFKLS} Dos Santos Ferreira D., Kurylev Y., Lassas M., Salo M., \emph{The Calder\'on problem in transversally anisotropic geometries}, J. Eur. Math. soc. (JEMS) $\mathbf{18}$ no. $11$, (2016), 2579-2626.


\bibitem{GKLU2}  Greenleaf A., Kurylev Y., Lassas M., Uhlmann G., \emph{Invisibility and inverse problems}, Bull. Amer. Math. Soc. $\mathbf{46}$ (2009), no. 1, 55-97.
\bibitem{GLU} Greenleaf A., Lassas M., Uhlmann G., \emph{On non-uniqueness for Calder\'on's inverse problem}, Mathematical Research Letters, $\mathbf{10}$, Vol. 5, (2003), 685-693.
\bibitem{GSB} Guillarmou C., S\`a Barreto A., \emph{Inverse problems for Einstein manifolds}, Inverse Probl. Imaging $\mathbf{3}$, (2009), 1-15.
\bibitem{GT} Gilbarg D., Trudinger N. N., \emph{Elliptic partial differential equations of second order}, Reprint of the 1998 edition. 
Classics in Mathematics. Springer-Verlag, Berlin, Heidelberg (2001). 



\bibitem{Ha} Hausdorff, F., \emph{ Der Wertvorrat einer Bilinearform}, Math. Z. 3 (1919), 314-316.
\bibitem{Hen} Henrot A., \emph{Extremum problems for eigenvalues of elliptic operators}, Birkh\"auser  Verlag, (2006).
Classics in Mathematics. Springer-Verlag, Berlin, (2001). 
\bibitem{Hor1} H\"ormander L., \emph{The Analysis of Linear Partial Differential Operators, III}, Grundlehren der mathematischen Wissenschaften $\mathbf{256}$, Springer-Verlag, Berlin Heidelberg, (1985).
\bibitem{Hor2} H\"ormander L., \emph{Uniqueness theorems for second order elliptic differential equations}, Communications in Partial Differential Equations $\mathbf{8}$ (1), (1983), 21-64.


\bibitem{Ko} Kobayashi, S., \emph{Transformation Groups in Differential Geometry}, Ergebnisse der mathematik und ihrer Grenzgebiete $\mathbf{70}$, Springer-Verlag, Berlin, Heidelberg, (1972).
\bibitem{KrUh1}  Krupchyk K., Uhlmann G., \emph{The Calde\'on  problem with partial data for conductivities with $3/2$ derivatives}, Comm. Math. Phys. $\mathbf{348}$ (2016), no. 1, 185-219.
\bibitem{KS1} Kenig C., Salo M, \emph{The Calder\'on problem with partial data on manifolds and applications}, Analysis \& PDE $\mathbf{6}$, no. 8, (2013), 2003-2048.


\bibitem{LaU} Lassas M., Uhlmann G., \emph{On determining a Riemannian manifold from the Dirichlet-to-Neumann map}, Ann. Scient. Ec. Norm. Sup., $4^e$ s\'erie, $\mathbf{34}$, (2001), 771-787.
\bibitem{LaTU} Lassas M., Taylor G., Uhlmann G., \emph{The Dirichlet-to-Neumann map for complete Riemannian manifolds with boundary}, Comm. Anal. Geom. $\mathbf{11}$, (2003), 207-221.
\bibitem{Lee} Lee J. M., \emph{Introduction to Smooth Manifolds}, Graduate Texts in Mathematics, Second Edition, Springer.
\bibitem{LeU} Lee J.M., Uhlmann G., \emph{Determining anisotropic real-analytic conductivities by boundary measuremements}, Comm. Pure Appli. Math. $\mathbf{42}$ no. 8, (1989), 1097-1112.
\bibitem{Li} Lionheart W. R. B., \emph{Conformal uniqueness results in anisotropic electrical impedance imaging}, Inverse Problems $\mathbf{13}$, (1997), 125-134.


\bibitem{Ou} Ouhabaz E. M., \emph{ A "milder" version of Calder�n's inverse problem for anisotropic conductivities and partial data}, J. Spectr. Theory  $\mathbf{8}$, no. 2, (2018),  435-457. 







\bibitem{RiYe} Rivi\`ere T., Ye D., \emph{Resolution of the prescribed volume form equation}, NoDEA, Nonlinear Differential Equations and Applications, $\mathbf{25}$, (1996), pp. 323-369.


\bibitem{Sa} Salo M., \emph{The Calder\'on problem on Riemannian manifolds, Inverse problems and applications: inside out. II}, Math. Sci. Res. Inst. Publ., $\mathbf{60}$, Cambridge Univ. Press, Cambridge, (2013), 167-247.


\bibitem{Tat} Tataru D., \emph{Unique continuation for partial differential equations}, The IMA Volumes in Mathematics and its Applications $\mathbf{137}$, (2003), 239-255.
\bibitem{Ta1} Taylor M., \emph{Partial Differential Equations, I. Basic theory}, Applied Mathematical Sciences 115, Springer-Verlag New York, (2011).
\bibitem{To} Toeplitz, O., \emph{Das algebraishe Analogon zu einem Satze von Fej\' er}, Math. Z. 2 (1918), 187-197.
\bibitem{U1} Uhlmann G., \emph{Electrical impedance tomography and Calder\'on's problem}, Inverse Problems $\mathbf{25}$, (2009), 123011, 39 pages.
\bibitem{U2} Uhlmann G., \emph{Inverse problems: seeing the unseen}, Bull. Math. Sci. $\mathbf{4}$ (2014), no. 2, 209-279.

%\bibitem{Ze} Zehnder E., \emph{Note on the smoothing symplectic and volume preserving diffeomorphisms}, Lecture Notes on Maths. $\mathbf{597}$ (1976), no. 2, pp. 828-855.


\end{thebibliography}
\end{document}